\newtheorem{thm}{Theorem}[section]
\newtheorem{cor}[thm]{Corollary}
\newtheorem{lem}[thm]{Lemma}
\theoremstyle{definition}
\theoremstyle{remark}
\newtheorem{rem}[thm]{Remark}
\newtheorem{ex}[thm]{Example}
\numberwithin{equation}{section}
\title{ Operator mean inequalities and Kwong functions}
\author{ Nahid Gharakhanlu\footnote{nahid.gh80@gmail.com}~, Mohammad Sal Moslehian\footnote{moslehian@um.ac.ir}~, Hamed Najafi\footnote{hamednajafi20@gmail.com}
\\
\small Department of Pure Mathematics, Ferdowsi University of Mashhad,\\ \small P. O. Box 1159, Mashhad, 91775, Iran.}
\begin{document}
\maketitle
\noindent \abstract{In this paper, we study operator mean inequalities for the weighted arithmetic, geometric and harmonic means. We give a slight modification of Audenaert's result to show the relation between Kwong functions and operator monotone functions. Operator mean inequalities provide some analogs of the geometric concavity property for Kwong functions, operator convex, and operator monotone functions. Moreover, we give our points across by way of some examples which show the usage of our main results.}

\bigskip
\noindent{\textbf{ AMS Subject Classification:} } 47A63; 47A64; 15A60.\\
\noindent {\textbf{keywords:}} Operator mean inequalities; Kwong functions; Operator convex; Operator monotone.

\section{Introduction}

Let $\mathbb{B}(\mathcal{H})$ denote the algebra of all bounded linear operators on a complex Hilbert space $(\mathcal{H}, \langle \cdot,\cdot\rangle)$. In the case when $\dim \mathcal{H} =n$, we identify $\mathbb{B}(\mathcal{H})$ with the matrix algebra $\mathbb{M}_{n}$ of all $n\times n$ matrices with entries in the complex field $\mathbb{C}$. An operator $A\in \mathbb{B}(\mathcal{H})$ is called positive if $\langle Ax , x\rangle \geq 0$ holds for every $x \in \mathcal{H}$ and then we write $A\geq 0$. An operator $A$ is called strictly positive if $A$ is positive and invertible and we write $A>0$. For self-adjoint operators $A, B\in \mathbb{B}(\mathcal{H})$, we say $A\leq B$ if $B-A\geq 0$. Throughout the paper, we assume that all functions are continuous. Let $f$ be a real-valued function defined on an interval $J$. A function $f$ is called operator monotone if for each self-adjoint operators $A, B\in \mathbb{B}(\mathcal{H})$ with spectra in $J$, $A\leq B$ implies $f(A)\leq f(B)$. A real-valued function $f$ is said to be operator convex if $f((1-\alpha) A+ \alpha B)\leq (1-\alpha) f(A)+ \alpha f(B)$ for all $\alpha \in [0,1]$. $f$ is operator concave if $-f$ is operator convex. For more details, we refer the readers to \cite[Chapter V]{5}.

In addition to the operator monotone functions, Kwong or anti-L\"owner functions have been of interest in recent years. Let $f$ be a  real-valued function on an interval $(0, \infty)$. Kwong or anti-L\"owner matrices associated with $f$ is defined by

\begin{equation*}
K_{f}=\left[ \frac{f(x_{i})+f(x_{j})}{x_{i}+x_{j}}     \right]_{i,j=1}^{n},
\end{equation*}
for any distinct real numbers $x_{1}, ...x_{n}$ in $(0, \infty)$. A function $f$ is  called Kwong function if the Kwong  matrix $K_{f}$ is positive for distinct real numbers $x_{1}, ...x_{n}$ \cite[Chapter 5]{B1}.
 Kwong \cite{14} has shown that if a function $f$ from $(0, \infty)$ into itself is operator monotone, then $f$ is a Kwong  function and so is every non-negative operator monotone decreasing function. The statement for non-negative operator monotone decreasing functions follows easily from this by noting that the function $f$ is Kwong if and only if $\frac{1}{f}$ is  Kwong.  For recent  research papers on Kwong functions and Kwong matrices, we refer the readers to \cite{17, BAK, 15, k2, ZHA} and the references therein. Kwong  functions have applications in the theory of Lyapunov-type equations. Audenaert showed the following relation between Kwong functions and operator monotone functions.
\begin{thm}\emph{\cite[Theorem 2.1]{17}}\label{9}
Let $f$ be a real-valued function on $(0,\infty)$.  Then
\begin{enumerate}[(i)]
\item $f$ is a Kwong function  on $(0,\infty)$ if and only if $\sqrt{t} f(\sqrt{t})$ is a non-negative operator monotone function on $(0,\infty)$.
\item If $f$ is a Kwong function  on $(0,\infty)$, then $\frac{f(\sqrt{t})}{\sqrt{t}}$ is a non-negative operator monotone decreasing function on $(0,\infty)$.
\end{enumerate}
\end{thm}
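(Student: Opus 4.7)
The plan is to deduce part (i) from L\"owner's integral representation for non-negative operator monotone functions on $(0,\infty)$, and to obtain part (ii) from part (i) by using the fact that the reciprocal of a Kwong function is Kwong.

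For the ``only if'' direction of part (i), I will assume $g(t):=\sqrt{t}\,f(\sqrt{t})$ is non-negative and operator monotone on $(0,\infty)$, invoke L\"owner's representation $g(t)=\alpha+\beta t+\int_0^\infty \tfrac{\lambda t}{\lambda+t}\,d\mu(\lambda)$ with $\alpha,\beta\geq 0$ and $\mu$ a positive Borel measure, and then substitute $t=x^2$ and divide by $x$ to obtain $f(x)=\alpha/x+\beta x+\int_0^\infty \tfrac{\lambda x}{\lambda+x^2}\,d\mu(\lambda)$. It will then suffice to check that each of $x\mapsto 1/x$, $x\mapsto x$, and $x\mapsto \lambda x/(\lambda+x^2)$ (for $\lambda>0$) is Kwong, since positivity is preserved under non-negative combinations and integrals. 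The first two have manifestly rank-one Kwong matrices; for the third, a brief computation gives $K_f=\bigl[\tfrac{\lambda(\lambda+x_ix_j)}{(\lambda+x_i^2)(\lambda+x_j^2)}\bigr]$, which factors as $DGD$ where $D=\mathrm{diag}\bigl(\sqrt{\lambda}/(\lambda+x_i^2)\bigr)$ and $G=[\lambda+x_ix_j]$ is the Gram matrix of the vectors $\bigl\{(\sqrt{\lambda},x_i)\bigr\}$, hence positive semidefinite.

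For the ``if'' direction of part (i), I assume $f$ is Kwong and aim to show $g$ is operator monotone. By L\"owner's theorem, it suffices to prove that the L\"owner matrix $L_g=\bigl[(g(t_i)-g(t_j))/(t_i-t_j)\bigr]$ is positive semidefinite at every finite set of distinct $t_1,\dots,t_n\in(0,\infty)$. Setting $x_i=\sqrt{t_i}$ and using the algebraic identity $2(x_if(x_i)-x_jf(x_j))=(x_i+x_j)(f(x_i)-f(x_j))+(x_i-x_j)(f(x_i)+f(x_j))$, I obtain $L_g=\tfrac{1}{2}(L_f+K_f)$, with both matrices on the right taken at the points $x_i$. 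The main step — and the principal obstacle — is to pass from $K_f\geq 0$ to $L_f+K_f\geq 0$. The cleanest route will establish the integral representation $f(x)=\alpha/x+\beta x+\int_0^\infty \tfrac{\lambda x}{\lambda+x^2}\,d\mu(\lambda)$ for every Kwong function, either by a Krein--Milman/Choquet argument identifying the extreme rays of the (suitably normalized) convex cone of Kwong functions with the three elementary functions above, or by applying a Bochner-type representation to the positive definite kernel $(x,y)\mapsto (f(x)+f(y))/(x+y)$. Once such a representation is in hand, $g$ is visibly non-negative operator monotone by the computation of the previous paragraph.

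Part (ii) is a short consequence of part (i). A nonzero Kwong function $f$ must be strictly positive on $(0,\infty)$, since a zero on the diagonal of the positive semidefinite matrix $K_f$ would force the corresponding row and column to vanish and hence $f\equiv 0$. Then $1/f$ is again Kwong, because $K_{1/f}=\mathrm{diag}(1/f(x_i))\,K_f\,\mathrm{diag}(1/f(x_j))\geq 0$. Applying part (i) to $1/f$ shows that $\sqrt{t}/f(\sqrt{t})$ is non-negative operator monotone on $(0,\infty)$, and taking the reciprocal — using that the inverse of a strictly positive operator monotone function is operator monotone decreasing — yields that $f(\sqrt{t})/\sqrt{t}$ is a non-negative operator monotone decreasing function on $(0,\infty)$.
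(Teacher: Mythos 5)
This theorem is quoted in the paper from Audenaert \cite{17} without proof, so there is no in-paper argument to compare against; judged on its own terms, your proposal is incomplete. The direction ``$\sqrt{t}\,f(\sqrt{t})$ non-negative operator monotone $\Rightarrow$ $f$ Kwong'' (which you label the ``only if'' direction, though it is the ``if'' half of the stated equivalence) is correct and essentially complete: L\"owner's representation, the substitution $t=x^{2}$, and the verification that $1/x$, $x$ and $\lambda x/(\lambda+x^{2})$ are Kwong via the factorization $DGD$ all check out, and positivity of Kwong matrices does pass to non-negative combinations and integrals. Part (ii) is also sound: the observation that a not-identically-zero Kwong function must be strictly positive, the diagonal congruence $K_{1/f}=\mathrm{diag}(1/f(x_i))\,K_f\,\mathrm{diag}(1/f(x_j))$, and the fact that the reciprocal of a strictly positive operator monotone function is operator monotone decreasing together reduce (ii) to (i).

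The genuine gap is the converse direction of (i): $f$ Kwong $\Rightarrow$ $g(t)=\sqrt{t}\,f(\sqrt{t})$ non-negative operator monotone. Your identity $L_g=\tfrac{1}{2}(L_f+K_f)$ is algebraically correct, but it only reduces the problem to showing $L_f+K_f\geq 0$ from the sole hypothesis $K_f\geq 0$, and you do not carry that step out. The escape you propose --- that every Kwong function admits the representation $f(x)=\alpha/x+\beta x+\int_0^\infty \frac{\lambda x}{\lambda+x^{2}}\,d\mu(\lambda)$ --- is, in view of your first paragraph, equivalent to the whole theorem, so invoking it at this point is circular unless it is established independently; and neither of the two routes you mention (a Krein--Milman/Choquet analysis of the extreme rays of the cone of Kwong functions, or a Bochner-type representation of the kernel $(f(x)+f(y))/(x+y)$) is executed, or even sketched in enough detail to assess whether it would succeed. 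This is precisely the hard half of Audenaert's theorem; as written, the proposal asserts it rather than proves it, so some concrete mechanism (for instance, a matrix identity or congruence tying $K_f$ at the points $x_i$ to a L\"owner matrix of $g$ so that L\"owner's theorem can be applied) is still needed to close the argument.
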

An operator mean $\sigma$ in the sense of Kubo-Ando \cite{4} is defined by a positive operator
monotone function $f$ on the half interval $(0,\infty)$ with $f(1)=1$ as
\begin{equation*}
A\sigma B=A^{\frac{1}{2}} f\left(A^{-\frac{1}{2}}B A^{-\frac{1}{2}}\right) A^{\frac{1}{2}},
\end{equation*}
for positive invertible operators $A$ and $B$. The function $f$ is called the representing function of $\sigma$. Let $A, B >0$ and $\alpha\in [0,1]$. The $\alpha$-weighted operator geometric mean,  arithmetic mean, and harmonic mean of $A$ and $B$ are defined, respectively, by
\begin{align*}
A\sharp_{\alpha} B &=A^{\frac{1}{2}}\left(A^{-\frac{1}{2}}B A^{-\frac{1}{2}}\right)^{\alpha} A^{\frac{1}{2}},\\
A\nabla_{\alpha} B &= (1-\alpha)A+\alpha B,\\
A!_{\alpha} B &= \left( (1-\alpha)A^{-1}+\alpha B^{-1}\right)^{-1}.
\end{align*}
When $\alpha=\frac{1}{2}$, $A\sharp_{\frac{1}{2}}B$, $A\nabla_{\frac{1}{2}} B$ and $A!_{\frac{1}{2}} B$ are called the operator geometric mean, operator arithmetic mean and operator harmonic mean, and denoted by $A\sharp B$, $A\nabla B$ and $A!B$ respectively. One can easily show that $(A\sharp_{\alpha} B)^{-1}=A^{-1} \sharp_{\alpha} B^{-1}$ for $A, B>0$ and $\alpha\in [0,1]$.

 It is well-known the operator arithmetic-geometric  inquality or Young inequality as 
\begin{equation*}
A\nabla_{\alpha} B \geq A\sharp_{\alpha} B,
\end{equation*}
for $A, B>0$ and $\alpha\in [0,1]$. The study of the reverse Young inequality has received  considerable attention in recent years, and there are several multiplicative and additive reverses of this inequality (see \cite[Chapter 2]{7} and the references therein). Fujii et al. \cite{1} proved that  if $0<mI\leq A, B\leq MI$ with $0< m\leq M$ and $h=\frac{M}{m}$, then
\begin{equation}\label{eq2}
\frac{1}{\sqrt{K(h)}} (A\nabla B) \leq A\sharp B \leq \sqrt{K(h)}  (A ! B),
\end{equation}
where $K(\cdot)$ is the well-known Kantorovich constant  $K(x)=\frac{(1+x)^{2}}{4x}$ for $x>0$.

Tominaga obtained the reverse $\alpha$-weighted arithmetic-geometric mean inequality  as follows.
\begin{thm}\emph{\cite[Theorem 2.1]{Tominaga}}\label{1}
If $0<mI\leq A, B\leq MI$, $\alpha \in [0,1]$, and $h=\frac{M}{m}$, then
\begin{equation*}
A\nabla_{\alpha} B\leq S(h) (A\sharp_{\alpha}B),
\end{equation*}
where the constant Specht's ratio $S(\cdot)$ is defined by $S(x)=\frac{x^{\frac{1}{x-1}}}{e\log x^{\frac{1}{x-1}}}$ for $x> 0$.
\end{thm}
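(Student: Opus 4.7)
My plan is to reduce the inequality to a scalar one-variable estimate and then lift it via the substitution $X = A^{-1/2}BA^{-1/2}$. By definition $A\sharp_{\alpha}B = A^{1/2}X^{\alpha}A^{1/2}$, while $(1-\alpha)A + \alpha B = A^{1/2}\big((1-\alpha)I + \alpha X\big)A^{1/2}$; therefore it suffices to bound $(1-\alpha)I + \alpha X$ from above by $S(h)X^{\alpha}$, since conjugation by $A^{1/2}$ preserves the operator order.

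First I would establish the scalar inequality
\[
(1-\alpha) + \alpha x \leq S(h)\, x^{\alpha} \qquad \text{for all } x\in[1/h,h] \text{ and } \alpha\in[0,1].
\]
Fixing $\alpha$ and setting $g(x) = \frac{(1-\alpha) + \alpha x}{x^{\alpha}}$, the derivative $g'(x) = \frac{\alpha(1-\alpha)(x-1)}{x^{\alpha+1}}$ shows that $g$ is unimodal with a minimum at $x=1$, so its maximum on $[1/h,h]$ is attained at an endpoint. Moreover, the identity $g_{\alpha}(1/x) = g_{1-\alpha}(x)$ shows that the joint supremum over $(x,\alpha)\in[1/h,h]\times[0,1]$ is symmetric under $(x,\alpha)\mapsto(1/x,1-\alpha)$, so it suffices to maximise $\varphi(\alpha) := \frac{(1-\alpha) + \alpha h}{h^{\alpha}}$ over $\alpha\in[0,1]$. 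Since $\varphi(0)=\varphi(1)=1$, the extremum is interior; solving $\varphi'(\alpha)=0$ via logarithmic differentiation yields $\alpha^{\ast} = \frac{h-1-\log h}{(h-1)\log h}$, whence $(1-\alpha^{\ast})+\alpha^{\ast}h = \frac{h-1}{\log h}$ and $h^{\alpha^{\ast}} = e/h^{1/(h-1)}$. Substituting gives
\[
\varphi(\alpha^{\ast}) = \frac{(h-1)\,h^{1/(h-1)}}{e\log h} = \frac{h^{1/(h-1)}}{e\log h^{1/(h-1)}} = S(h).
\]

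Second, the hypothesis $0 < mI \leq A, B \leq MI$ gives the spectral bound $\frac{1}{h}I \leq X \leq hI$. Applying the scalar inequality through the continuous functional calculus to $X$ yields $(1-\alpha)I + \alpha X \leq S(h)\,X^{\alpha}$, and conjugating both sides with $A^{1/2}$ produces $(1-\alpha)A + \alpha B \leq S(h)(A\sharp_{\alpha}B)$, as required.

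The main obstacle is the scalar step, specifically the algebraic simplification that identifies $\varphi(\alpha^{\ast})$ with Specht's ratio; the rest is a routine instance of the transfer principle that converts scalar bounds on $[1/h,h]$ into operator bounds for $A,B$ sandwiched between $mI$ and $MI$.
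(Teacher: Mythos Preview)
Your proof is correct. Note, however, that the paper does not actually prove this theorem: it is quoted as a known result from Tominaga, and the only argument the paper supplies is the Remark after Corollary~\ref{2-1}, which recovers the statement by specializing Theorem~\ref{5} to $\alpha=\beta$, $s=m/M$, $t=M/m$ and then invoking the scalar bound $(1-\alpha)x^{-\alpha}+\alpha x^{1-\alpha}\le S(x)$ by citation to \cite[(2.2)]{Tominaga}.

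Your route and the paper's share the same skeleton: analyse $g_\alpha(x)=\dfrac{(1-\alpha)+\alpha x}{x^{\alpha}}$, observe via its derivative that the maximum on $[1/h,h]$ occurs at an endpoint, and then lift to operators through $X=A^{-1/2}BA^{-1/2}$ with the sandwich $h^{-1}I\le X\le hI$. The genuine difference is that you carry out the optimisation over $\alpha$ explicitly and identify $\max_{\alpha}\varphi(\alpha)=S(h)$ from scratch, whereas the paper treats that scalar identity as a black box from Tominaga and instead invests its effort in the two-parameter generalisation (arbitrary $\alpha,\beta$ and an asymmetric sandwich $sA\le B\le tA$). Your argument is therefore self-contained for the specific statement, while the paper's buys a sharper, parameter-dependent constant $\lambda\le S(h)$ at the cost of importing the Specht-ratio step.
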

 In this paper, we obtain new sharp inequalities between the weighted arithmetic, geometric and harmonic means. Indeed, In Theorem \ref{5},  we give a new generalization of  \eqref{eq2} and Theorem \ref{1} with a stronger sandwich condition. In Theorem \ref{8}, a slight modification of Theorem \ref{9} is provided. By Corollary \ref{6} and Corollary \ref{3}, we deduce the analogs of the geometric concavity property for Kwong functions and operator convex functions. Moreover, we give some examples illustrating our results.
 
\section{Some arithmetic-geometric mean inequalities}

We start our work with following result.

\begin{thm}\label{5}
Let $A$ and $B$ be positive operators and $\alpha, \beta\in [0,1]$. If $0<sA\leq B\leq tA$ for some scalars $0<s\leq t$, then
\begin{equation*}
\lambda^{-1} (A\nabla_{\alpha} B) \leq  A\sharp_{\beta} B \leq \mu (A!_{\alpha} B),
\end{equation*}
where 
$$
\lambda =\max \left\lbrace  (t^{-\beta} \nabla_{\alpha} t^{1-\beta}),  (s^{-\beta} \nabla_{\alpha} s^{1-\beta})\right\rbrace
$$
and
$$
\mu =\max \left\lbrace  (t^{\beta} \nabla_{\alpha} t^{-(1-\beta)}),  (s^{\beta} \nabla_{\alpha} s^{-(1-\beta)})\right\rbrace .
$$
\end{thm}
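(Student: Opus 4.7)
The plan is to reduce both operator inequalities to scalar inequalities on the spectrum of a ``normalized ratio'' operator, and then to extract the constants $\lambda$ and $\mu$ by elementary one-variable calculus.

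Setting $X := A^{-1/2}BA^{-1/2}$, the hypothesis $sA \leq B \leq tA$ becomes $sI \leq X \leq tI$, and the three means rewrite as $A\nabla_\alpha B = A^{1/2}\bigl((1-\alpha)I+\alpha X\bigr)A^{1/2}$, $A\sharp_\beta B = A^{1/2}X^\beta A^{1/2}$, and $A!_\alpha B = A^{1/2}\bigl((1-\alpha)I+\alpha X^{-1}\bigr)^{-1}A^{1/2}$. Since conjugation by $A^{1/2}$ preserves the operator order, the desired inequality $\lambda^{-1}(A\nabla_\alpha B)\leq A\sharp_\beta B$ is equivalent to $(1-\alpha)I+\alpha X \leq \lambda X^\beta$. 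Pre- and post-multiplying by $X^{-\beta/2}$ (which commutes with everything in sight) and invoking the continuous functional calculus on the spectrum of $X$, this reduces to the scalar inequality
\begin{equation*}
g(x):=(1-\alpha)x^{-\beta}+\alpha x^{1-\beta}\leq \lambda\qquad\text{for all } x\in[s,t].
\end{equation*}
An analogous reduction, using that $X^\beta$ commutes with $(1-\alpha)I+\alpha X^{-1}$, turns the right-hand inequality into
\begin{equation*}
h(x):=(1-\alpha)x^\beta+\alpha x^{\beta-1}\leq \mu\qquad\text{for all } x\in[s,t].
\end{equation*}

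I would then show that the maxima of $g$ and $h$ on $[s,t]$ are attained at an endpoint. A direct computation gives $g'(x)=x^{-\beta-1}\bigl[\alpha(1-\beta)x-\beta(1-\alpha)\bigr]$ and $h'(x)=x^{\beta-2}\bigl[\beta(1-\alpha)x-\alpha(1-\beta)\bigr]$. In each expression the linear bracket is negative for small $x$ and positive for large $x$, so $g$ and $h$ are strictly decreasing on an initial segment of $(0,\infty)$ and strictly increasing thereafter. Consequently, on the subinterval $[s,t]$ the maximum of each is attained at $s$ or at $t$, and evaluating yields $g(s)=s^{-\beta}\nabla_\alpha s^{1-\beta}$, $g(t)=t^{-\beta}\nabla_\alpha t^{1-\beta}$, $h(s)=s^\beta\nabla_\alpha s^{-(1-\beta)}$, $h(t)=t^\beta\nabla_\alpha t^{-(1-\beta)}$, which are precisely the four quantities appearing in the definitions of $\lambda$ and $\mu$. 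Choosing $\lambda$ and $\mu$ as the respective maxima then establishes both halves of the sandwich.

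The main subtlety is confirming that the interior critical point of each function is a minimum rather than a maximum; the factorization of $g'$ and $h'$ shown above makes this transparent and avoids any second-derivative analysis. Degenerate cases $\alpha\in\{0,1\}$ or $\beta\in\{0,1\}$ either make the relevant function strictly monotone or remove the interior critical point altogether, so they present no additional difficulty.
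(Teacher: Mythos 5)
Your proof is correct and follows essentially the same route as the paper: both reduce the operator inequalities to scalar bounds on $[s,t]$ via the functional calculus applied to $X=A^{-1/2}BA^{-1/2}$, and both locate the maximum at an endpoint through a first-derivative sign analysis of $g(x)=(1-\alpha)x^{-\beta}+\alpha x^{1-\beta}$, which is exactly the paper's $f_{\alpha,\beta}$. The only difference is cosmetic: the paper obtains the harmonic-mean half by applying the arithmetic--geometric half to $A^{-1},B^{-1}$ (using $\tfrac{1}{t}A^{-1}\leq B^{-1}\leq\tfrac{1}{s}A^{-1}$, $(A^{-1}\nabla_{\alpha}B^{-1})^{-1}=A!_{\alpha}B$ and $(A\sharp_{\beta}B)^{-1}=A^{-1}\sharp_{\beta}B^{-1}$), whereas you analyze the second scalar function $h$ directly; both yield the same constant $\mu$.
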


\begin{proof}
Without loss of generality assume that $\alpha>0$ and $\beta<1$. The functional calculus shows that to obtain the first desired inequality it is enough to prove
\begin{equation*}
(1-\alpha)+\alpha x \leq \lambda x^{\beta},
\end{equation*}
where $s\leq x\leq t$ and  $\alpha, \beta\in [0,1]$. Put $f_{\alpha, \beta}(x)=\frac{(1-\alpha)+\alpha x}{x^{\beta}}$ for $s\leq x\leq t$. By direct computation, we deduce that
\begin{equation*}
f_{\alpha, \beta}^{'}(x)=\frac{\alpha (1-\beta)}{x^{\beta +1}}\left(x-\frac{\beta (1-\alpha)}{\alpha (1-\beta)}\right).
\end{equation*}
If $s<\frac{\beta(1-\alpha)}{\alpha(1-\beta)}< t$, then $f_{\alpha, \beta}^{'}(x)>0$ when $\frac{\beta(1-\alpha)}{\alpha(1-\beta)}<x<t$, $f_{\alpha, \beta}^{'}(x)<0$ when $s<x<\frac{\beta(1-\alpha)}{\alpha(1-\beta)}$, thus $f_{\alpha, \beta}(x)$ attains it's maximum at $t$ or $s$.\\
If $\frac{\beta(1-\alpha)}{\alpha(1-\beta)}\leq s$, then $f_{\alpha, \beta}^{'}(x)>0$ for $s<x<t$, thus $f_{\alpha, \beta}(x)$ attains it's maximum at $t$.\\
If $t\leq\frac{\beta(1-\alpha)}{\alpha(1-\beta)}$, then $f_{\alpha, \beta}^{'}(x)<0$ for $s<x<t$. Thus $f_{\alpha, \beta}(x)$ attains it's maximum at $s$.\\
From the above discussion, we deduce that $f_{\alpha, \beta}(x)$ attains it's maximum at $t$ or $s$; that is to say
\begin{equation*}
\frac{(1-\alpha)+\alpha x}{x^{\beta}}\leq \max \left\lbrace ( t^{-\beta} \nabla_{\alpha} t^{1-\beta}) , ( s^{-\beta} \nabla_{\alpha} s^{1-\beta})\right\rbrace .
\end{equation*}
Hence, we deduce the following inequality 
\begin{equation*}
(1-\alpha)+\alpha x \leq \lambda x^{\beta},
\end{equation*}
where $s\leq x\leq t$, $\alpha, \beta\in [0,1]$ and $\lambda=\max \left\lbrace ( t^{-\beta} \nabla_{\alpha} t^{1-\beta}) , ( s^{-\beta} \nabla_{\alpha} s^{1-\beta})\right\rbrace$. Therefore,  for every strictly positive operator $X$ with $0< sI\leq X\leq tI $,  we have
\begin{equation*}
(1-\alpha) + \alpha X \leq \lambda X^{\beta}.
\end{equation*}
Since $0<sA\leq B\leq tA$, by substituting $X= A^{\frac{-1}{2}} B A^{\frac{-1}{2}}$ and multiplying $A^{\frac{1}{2}}$ to the both sides in the above inequality, we get 
\begin{equation*}
(1-\alpha) A+\alpha B\leq \lambda (A\sharp_{\beta} B).
\end{equation*}
Thus, we have 
\begin{equation}\label{eq1}
A\nabla_{\alpha} B\leq \max \left\lbrace ( t^{-\beta} \nabla_{\alpha} t^{1-\beta}) , ( s^{-\beta} \nabla_{\alpha} s^{1-\beta})\right\rbrace A\sharp_{\beta} B.
\end{equation}
Since  $\frac{1}{t} A^{-1}\leq B^{-1}\leq \frac{1}{s} A^{-1}$ and $(A^{-1}\nabla_{\alpha} B^{-1})^{-1}= A!_{\alpha} B$, the above inequality deduces that  
\begin{equation}\label{eq3}
A!_{\alpha} B \geq \left( \max \left\lbrace ( t^{\beta} \nabla_{\alpha} t^{-(1-\beta)}) , ( s^{\beta} \nabla_{\alpha} s^{-(1-\beta)})\right\rbrace\right)^{-1} A\sharp_{\beta} B.
\end{equation}
Put $\mu= \max \left\lbrace ( t^{\beta} \nabla_{\alpha} t^{-(1-\beta)}), ( s^{\beta} \nabla_{\alpha} s^{-(1-\beta)})\right\rbrace$. Now, inequalities \eqref{eq1} and \eqref{eq3} gives 
\begin{equation}\label{eq4}
\lambda^{-1} (A\nabla_{\alpha} B) \leq  A\sharp_{\beta} B \leq \mu (A!_{\alpha} B).
\end{equation}
\end{proof}

\begin{rem}
Let $\sigma_{\alpha} $ and  $\tau_{\alpha}$ are two arbitrary means between $\alpha$-weighted arithmetic and harmonic means such that $\nabla_{\alpha}\geq \sigma_{\alpha}\,\, , \,\, \tau_{\alpha}\geq !_{\alpha}$. From the general theory of matrix means and as an immediate consequence of \eqref{eq4}, we have
\begin{equation*}
\lambda^{-1} (A\sigma_{\alpha} B) \leq  A\sharp_{\beta} B \leq \mu (A\tau_{\alpha} B).
\end{equation*}
\end{rem}

\begin{cor}\label{2-1}
Let $A$ and $B$ be positive operators and $\alpha, \beta\in [0,1]$. If $0<mI\leq A, B\leq MI$ for some scalares $0<m\leq M$, then
\begin{equation*}
\lambda^{-1} (A\nabla_{\alpha} B) \leq  A\sharp_{\beta} B \leq \lambda (A!_{\alpha} B),
\end{equation*}
where 
\begin{equation*}
\lambda=\max\left\lbrace (1-\alpha) \left(\frac{m}{M}\right)^{\beta} +\alpha \left(\frac{M}{m}\right)^{1-\beta}, (1-\alpha) \left(\frac{M}{m}\right)^{\beta} +\alpha \left(\frac{m}{M}\right)^{1-\beta} \right\rbrace.
\end{equation*}
\end{cor}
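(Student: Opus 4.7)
The plan is to derive the corollary as a direct specialization of Theorem~\ref{5}. The first step is to convert the two-sided uniform bound $0<mI\leq A,B\leq MI$ into a relative sandwich of the form $sA\leq B\leq tA$. Since $B\geq mI = \tfrac{m}{M}(MI)\geq \tfrac{m}{M}A$ and $B\leq MI=\tfrac{M}{m}(mI)\leq \tfrac{M}{m}A$, one has
\begin{equation*}
\frac{m}{M}\,A\,\leq\, B\,\leq\,\frac{M}{m}\,A,
\end{equation*}
so Theorem~\ref{5} applies with the choice $s=m/M$ and $t=M/m$.

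With this choice, the second step is a bookkeeping verification that the two constants $\lambda$ and $\mu$ appearing in Theorem~\ref{5} coincide and both equal the $\lambda$ stated in the corollary. Using $t=M/m$, I compute $t^{-\beta}\nabla_\alpha t^{1-\beta}=(1-\alpha)(m/M)^{\beta}+\alpha(M/m)^{1-\beta}$, and using $s=m/M$, I get $s^{-\beta}\nabla_\alpha s^{1-\beta}=(1-\alpha)(M/m)^{\beta}+\alpha(m/M)^{1-\beta}$, so that $\lambda$ from the theorem is exactly the maximum displayed in the corollary. The key observation is that $s=1/t$ in this setting, so the pair of expressions defining $\mu$ in the theorem, namely $t^{\beta}\nabla_\alpha t^{-(1-\beta)}$ and $s^{\beta}\nabla_\alpha s^{-(1-\beta)}$, reproduces the same two numbers as those defining $\lambda$, merely in the opposite order. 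Hence $\mu=\lambda$.

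Plugging these identifications into the conclusion of Theorem~\ref{5} gives
\begin{equation*}
\lambda^{-1}(A\nabla_{\alpha}B)\,\leq\, A\sharp_{\beta}B\,\leq\,\lambda(A!_{\alpha}B),
\end{equation*}
which is the desired inequality. There is no genuine obstacle here beyond the algebraic check that the two maxima defining $\lambda$ and $\mu$ collapse to the same constant; this collapse is exactly the payoff of the \emph{symmetric} bound hypothesis $mI\leq A,B\leq MI$, which forces the ratio bounds on $B$ relative to $A$ to be reciprocals of one another.
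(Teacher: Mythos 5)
Your proposal is correct and follows exactly the paper's own route: both reduce $0<mI\leq A,B\leq MI$ to the sandwich $\frac{m}{M}A\leq B\leq \frac{M}{m}A$, apply Theorem~\ref{5} with $s=m/M$, $t=M/m$, and use $s=1/t$ to see that the two expressions defining $\mu$ are a permutation of those defining $\lambda$, so $\mu=\lambda$. No differences worth noting.
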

\begin{proof}
Note that  if $0<mI\leq A, B\leq MI$, then $\frac{m}{M} A\leq B\leq \frac{M}{m} A$. Taking $t= \frac{M}{m} $ and $s=\frac{m}{M}$, we have $s=1/t$ and
\begin{align*}
\lambda &=\max \left\lbrace  (t^{-\beta} \nabla_{\alpha} t^{1-\beta}),  (s^{-\beta} \nabla_{\alpha} s^{1-\beta})\right\rbrace ,\\
&=\max \left\lbrace  (s^{\beta} \nabla_{\alpha} s^{-(1-\beta)}),  (t^{\beta} \nabla_{\alpha} t^{-(1-\beta)})\right\rbrace\\
&= \mu
\end{align*}
\end{proof}

\begin{rem}
 Let $x>0$. From \cite[(2.2)]{Tominaga} we infer that
\begin{equation*}
1\leq \frac{(1-\alpha)+\alpha x}{x^{\alpha}}=(1-\alpha) x^{-\alpha}+\alpha x^{1-\alpha}\leq S(x),
\end{equation*}
where $\alpha\in [0,1]$ and  $S(\cdot)$ is the constant Specht's ratio with $S(x)=S\left( \frac{1}{x}\right)$. If $0<m< M$, $0<mI\leq A, B\leq MI$, and $\alpha=\beta$, then 
\begin{equation*}
\lambda =\mu=\max\left\lbrace (1-\alpha) \left(\frac{m}{M}\right)^{\alpha} +\alpha \left(\frac{M}{m}\right)^{1-\alpha}, (1-\alpha) \left(\frac{M}{m}\right)^{\alpha} +\alpha \left(\frac{m}{M}\right)^{1-\alpha} \right\rbrace \leq  S\left( \frac{M}{m}\right).
\end{equation*}
Therefore,
\begin{equation*}
S(h)^{-1} (A\nabla_{\alpha} B)\leq \lambda^{-1}(A\nabla_{\alpha} B) \leq  A\sharp_{\alpha} B \leq  \lambda (A!_{\alpha} B) \leq S(h) (A!_{\alpha} B),
\end{equation*} 
where $h=\frac{M}{m}$. This shows that Corollary \ref{2-1} is a generalization of Theorem \ref{1}.
 \end{rem}
 
 \begin{rem}
Taking $\alpha=\beta$ in Theorem \ref{5}, we have
 \begin{align*}
\lambda &=\max\left\lbrace (1-\alpha) t^{-\alpha} +\alpha t^{1-\alpha}, (1-\alpha) s^{-\alpha} +\alpha s^{1-\alpha} \right\rbrace \leq \max\left\lbrace S(s), S(t) \right\rbrace\\
\mu &=\max\left\lbrace (1-\alpha) t^{\alpha} +\alpha t^{-(1-\alpha)}, (1-\alpha) s^{\alpha} +\alpha s^{-(1-\alpha)} \right\rbrace \leq \max\left\lbrace S\left( \frac{1}{t}\right), S\left( \frac{1}{s}\right) \right\rbrace.
\end{align*}
Put $\gamma=\max\left\lbrace S(s), S(t) \right\rbrace $ to get
\begin{equation*}
\gamma^{-1} (A\nabla_{\alpha} B)\leq \lambda^{-1}(A\nabla_{\alpha} B) \leq  A\sharp_{\alpha} B \leq  \mu (A!_{\alpha} B) \leq \gamma (A!_{\alpha} B).
\end{equation*} 
This shows that Theorem \ref{5} is a generalization of \cite[lemma 1]{ghaemi}.
\end{rem}

\begin{cor}\label{2-2}
Let $A$ and $B$ be positive operators and $\alpha\in [0,1]$. If $0<mI\leq A, B\leq MI$ for some scalares $0<m\leq M$, then
\begin{equation*}
\lambda^{-1} (A\nabla_{\alpha} B) \leq  A\sharp B \leq \lambda (A!_{\alpha} B),
\end{equation*}
where 
\begin{equation*}
\lambda=\left\{
\begin{array}{rl}
 (1-\alpha) \left(\frac{m}{M}\right)^{1/2} +\alpha \left(\frac{M}{m}\right)^{1/2} \,\,&\,\, \text{if }\,\, \alpha\geq \frac{1}{2}\\
 (1-\alpha) \left(\frac{M}{m}\right)^{1/2} +\alpha \left(\frac{m}{M}\right)^{1/2} \,\,&\,\, \text{if }\,\, \alpha\leq \frac{1}{2}.
\end{array}\right.
\end{equation*}
\end{cor}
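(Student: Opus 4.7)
The plan is to obtain this result as a direct specialization of Corollary \ref{2-1} to $\beta = 1/2$, since $A\sharp B = A\sharp_{1/2}B$. Plugging $\beta = 1/2$ into the formula for $\lambda$ there would immediately produce the sandwich $\lambda^{-1}(A\nabla_\alpha B) \leq A\sharp B \leq \lambda(A!_\alpha B)$ with
\[
\lambda = \max\left\{(1-\alpha)\left(\tfrac{m}{M}\right)^{1/2}+\alpha\left(\tfrac{M}{m}\right)^{1/2},\ (1-\alpha)\left(\tfrac{M}{m}\right)^{1/2}+\alpha\left(\tfrac{m}{M}\right)^{1/2}\right\}.
\]
The only remaining task is to identify, as a function of $\alpha$, which of the two candidates inside the maximum dominates; this is precisely what packages the statement of the corollary into two cases.

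To decide the winner I would set $h = M/m \geq 1$ and subtract the two candidates. A short computation gives
\[
\bigl[(1-\alpha)h^{-1/2}+\alpha h^{1/2}\bigr] - \bigl[(1-\alpha)h^{1/2}+\alpha h^{-1/2}\bigr] = (2\alpha-1)\bigl(h^{1/2}-h^{-1/2}\bigr).
\]
Since $h \geq 1$ forces $h^{1/2}-h^{-1/2} \geq 0$, the sign of this difference is governed entirely by the sign of $2\alpha-1$. Hence for $\alpha \geq 1/2$ the first expression attains the maximum, yielding $\lambda = (1-\alpha)(m/M)^{1/2}+\alpha(M/m)^{1/2}$, and for $\alpha \leq 1/2$ the second one does, yielding the other formula.

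I do not anticipate any real obstacle: the whole argument is a bookkeeping specialization of Corollary \ref{2-1}, with the only nontrivial ingredient being the elementary sign analysis of the difference of two linear combinations of $h^{\pm 1/2}$. If anything needs care, it is only the boundary cases $\alpha = 0, 1/2, 1$, where both candidates coincide or simplify, and the stated two-case formula remains correct on the overlap $\alpha = 1/2$.
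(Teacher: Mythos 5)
Your proposal is correct and matches the paper's intent: the paper gives no explicit proof of Corollary \ref{2-2}, presenting it as an immediate specialization of Corollary \ref{2-1} to $\beta=\tfrac{1}{2}$, which is exactly what you do. Your sign analysis of $(2\alpha-1)(h^{1/2}-h^{-1/2})$ correctly resolves the maximum into the two stated cases, so the argument is complete.
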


\begin{rem}
Taking $\alpha=\frac{1}{2}$ in the Corollary \ref{2-2}, we get
\begin{equation*}
\lambda =\frac{1}{2}\left( \sqrt{\frac{M}{m}}+ \sqrt{\frac{m}{M}}\right).
\end{equation*}
If $h=\frac{M}{m}$, then $\lambda =\sqrt{K(h)}$. Thus, Corollary \ref{2-2} gives \eqref{eq2}.
\end{rem}

\section{Kwong functions}
In this section, first we give a slight modification of Theorem \ref{9}. We also show the relation between Kwong functions and operator convex functions.
\begin{thm}\label{8}
Let $f$ be a real-valued function on $(0,\infty)$. 
\begin{enumerate}[(i)]
\item If $p\geq \frac{1}{2}$ and $x^{p} f(x^{p})$ is a non-negative operator monotone function on $(0,\infty)$, then  $f$  is a  Kwong  function on $(0,\infty)$.
\item If $f$  is a  Kwong  function on $(0,\infty)$ and  $0\leq p\leq \frac{1}{2}$, then $x^{p} f(x^{p})$ is a non-negative operator monotone function on $(0,\infty)$.
\item If $f$  is a  Kwong  function on $(0,\infty)$ and  $0\leq p\leq \frac{1}{2}$, then $\frac{f(x^{p})}{x^{p}}$ is a non-negative operator monotone decreasing function on $(0,\infty)$.
\end{enumerate}
\end{thm}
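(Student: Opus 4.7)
The plan is to reduce each of the three parts to Audenaert's Theorem~\ref{9} via a change of variable $t\mapsto x^{2p}$, exploiting the fact that $t\mapsto t^{\alpha}$ is operator monotone on $(0,\infty)$ exactly when $\alpha\in[0,1]$, together with the standard fact that a composition $g\circ\varphi$ of operator monotone $g$ with operator monotone $\varphi$ is again operator monotone (and is operator monotone decreasing when $g$ is decreasing and $\varphi$ is increasing).

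For part~(i), I would set $g(x)=x^{p}f(x^{p})$, which by hypothesis is a non-negative operator monotone function on $(0,\infty)$. Substituting $x=t^{1/(2p)}$ yields
\begin{equation*}
g\bigl(t^{1/(2p)}\bigr)=t^{1/2}\,f\bigl(t^{1/2}\bigr)=\sqrt{t}\,f(\sqrt{t}).
\end{equation*}
The assumption $p\ge\tfrac12$ is exactly what makes $1/(2p)\in(0,1]$, so $t\mapsto t^{1/(2p)}$ is operator monotone, and hence $\sqrt{t}\,f(\sqrt{t})$ is non-negative operator monotone. By Theorem~\ref{9}(i), $f$ is Kwong.

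For part~(ii), the direction is reversed: Theorem~\ref{9}(i) gives that $h(t):=\sqrt{t}\,f(\sqrt{t})$ is non-negative operator monotone, and the identity $h(x^{2p})=x^{p}f(x^{p})$ expresses $x^{p}f(x^{p})$ as $h$ composed with $x\mapsto x^{2p}$. The constraint $0\le p\le\tfrac12$ is precisely what ensures $2p\in[0,1]$, so $x\mapsto x^{2p}$ is operator monotone and the composition is again non-negative operator monotone. For part~(iii), I apply the same substitution to $k(t):=f(\sqrt{t})/\sqrt{t}$, which by Theorem~\ref{9}(ii) is a non-negative operator monotone decreasing function; writing $f(x^{p})/x^{p}=k(x^{2p})$ and noting that the composition of a non-negative operator monotone decreasing function with the operator monotone map $x\mapsto x^{2p}$ (for $0\le p\le\tfrac12$) is again non-negative operator monotone decreasing finishes the argument.

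There is no real obstacle here; the one bookkeeping point to be careful about is the range of $p$ at each step: part~(i) needs $1/(2p)\le 1$ (hence $p\ge\tfrac12$), while parts~(ii) and~(iii) need $2p\le 1$ (hence $p\le\tfrac12$), so the asymmetry in the hypotheses is forced by which direction the power substitution is being applied. The non-negativity in each conclusion is inherited directly from the non-negativity asserted by Theorem~\ref{9}.
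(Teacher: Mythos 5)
Your proposal is correct and follows essentially the same route as the paper: the authors likewise reduce each part to Audenaert's Theorem~\ref{9} by the power substitution (writing $x^{1/2}f(x^{1/2})$ as $x^{p/2p}f(x^{p/2p})$ for part~(i) and $x^{p}f(x^{p})$ as $x^{2p/2}f(x^{2p/2})$ for parts~(ii) and~(iii)), invoking the same facts that $x\mapsto x^{q}$ is operator monotone for $q\in[0,1]$ and that composition with it preserves operator monotonicity and operator monotone decrease. No discrepancies to report.
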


\begin{proof}
Let $f(x)$ be a non-negative operator monotone function on $(0,\infty)$. Then $f(x^p)$  is an operator monotone functions on $(0,\infty)$ for each $0\leq p\leq 1$. Also, if $f$ is a non-negative operator monotone decreasing function on $(0,\infty)$, then $f(x^p)$ is an operator monotone decreasing function for each $0\leq p\leq 1$. Employing these facts, we present the proof of the theorem as follows.
\begin{enumerate}[(i)]
\item Let $x^{p} f(x^{p})$ be operator monotone for $p\geq \frac{1}{2}$. Thus, $x^{p/2p} f(x^{p/2p})=x^{1/2} f(x^{1/2})$ is non-negative operator monotone function on $(0,\infty)$. Now, $f(x)$ is a Kwong function by Theorem \ref{9}. 
\item Suppose $f$ is a Kwong  function on $(0,\infty)$ and $0\leq p\leq \frac{1}{2}$. Then by  Theorem \ref{9}, $x^{1/2} f(x^{1/2})$ is  a non-negative operator monotone function on $(0,\infty)$. So  $x^{2p/2} f(x^{2p/2})=x^{p} f(x^{p})$ is  operator monotone.
\item Let $f$ be a Kwong  function on $(0,\infty)$. It follows from Theorem \ref{9} that $\frac{ f(x^{1/2})}{x^{1/2}}$ is  a non-negative operator monotone decreasing function on $(0,\infty)$. Hence  $\frac{ f(x^{2p/2})}{x^{2p/2}}=\frac{f(x^{p})}{x^{p}}$ is  operator monotone decreasing for $0\leq p\leq \frac{1}{2}$.
\end{enumerate}
\end{proof} 

\begin{thm}\label{2}
Let $g$ be a non-negative operator convex function on $(0,\infty)$ and $-1\leq p\leq 1$. Then $\frac{g(x^{p})}{x^{p}}$ is a Kwong function on $(0,\infty)$.
\end{thm}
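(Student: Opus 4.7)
My plan is to realize the Kwong matrix of $h(x) := g(x^p)/x^p$ as the Schur (Hadamard) product of two positive semidefinite matrices, and then invoke the Schur product theorem. First I would show that $\phi(x) := g(x)/x$ is an operator monotone function on $(0,\infty)$; this is a classical consequence of the operator convexity and non-negativity of $g$ via the Hansen-Pedersen characterization of operator convex functions. Kwong's theorem (cited in the introduction of the paper) then yields that $\phi$ is itself a Kwong function, so its Kwong matrix $K_\phi(i,j) := (\phi(y_i)+\phi(y_j))/(y_i+y_j)$ is positive semidefinite for every choice of distinct positive scalars $y_1,\ldots,y_n$.

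Next I would factor the Kwong matrix of $h$. Writing $h(x) = \phi(x^p)$ and substituting $y_i := x_i^p$, a direct rearrangement gives
\[
K_h(i,j) = \frac{\phi(y_i)+\phi(y_j)}{x_i+x_j} = \frac{\phi(y_i)+\phi(y_j)}{y_i+y_j}\cdot\frac{x_i^p+x_j^p}{x_i+x_j} = K_\phi(i,j)\cdot L(i,j),
\]
where $L(i,j) := (x_i^p + x_j^p)/(x_i + x_j)$ is precisely the Kwong matrix of the power function $t \mapsto t^p$ at $x_1,\ldots,x_n$. The hypothesis $-1 \leq p \leq 1$ is used exactly here: by Theorem \ref{9}(i), $t \mapsto t^p$ is a Kwong function iff $\sqrt{t}\cdot(\sqrt{t})^p = t^{(1+p)/2}$ is a non-negative operator monotone function, which by the L\"owner-Heinz theorem holds iff $(1+p)/2 \in [0,1]$. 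This makes $L$ positive semidefinite, and so $K_h = K_\phi \odot L$ is positive semidefinite by the Schur product theorem, establishing that $h$ is a Kwong function.

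The step I expect to be most delicate is the Hansen-Pedersen input: the classical version typically requires $g$ to be defined on $[0,\infty)$ with $g(0)=0$, while the hypothesis of Theorem \ref{2} only asks that $g$ be operator convex on $(0,\infty)$. Bridging this gap may require either an implicit regularity assumption at the origin, or else an alternative route proceeding through the integral representation of non-negative operator convex functions on $(0,\infty)$ and a verification of the Kwong property of $g(x^p)/x^p$ on each atom of that representation before passing to the superposition.
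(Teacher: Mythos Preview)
Your Schur-product factorization $K_h = K_\phi \odot L$ is correct and elegant once the ingredients are in place, but the gap you flag in your last paragraph is genuine and not merely a regularity issue at the origin. Even when $g$ extends continuously to $[0,\infty)$, the quotient $\phi(x)=g(x)/x$ need not be operator monotone unless $g(0^+)=0$. Take $g(x)=(x-1)^2$: this is non-negative and operator convex on $(0,\infty)$, yet $g(x)/x=x-2+x^{-1}$ is not monotone even as a scalar function (its derivative $1-x^{-2}$ changes sign at $x=1$). So your step~1 fails as stated, and with it the positive semidefiniteness of the first Schur factor $K_\phi$.

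The paper addresses exactly this by subtracting the constant term first. It quotes Uchiyama's lemma that $\psi(x):=(g(x)-g(0))/x$ is operator monotone on $(0,\infty)$ for any operator convex $g$ (with $g(0):=g(0^+)$), and then decomposes
\[
\frac{g(x^{p})}{x^{p}}=\psi(x^{p})+\frac{g(0)}{x^{p}},
\]
noting that $\psi(x^{p})$ is operator monotone increasing or decreasing according as $0\le p\le 1$ or $-1\le p\le 0$, while $g(0)\,x^{-p}$ is a non-negative multiple of the Kwong function $x^{-p}$; the sum of two Kwong functions is Kwong. Your Schur-product route could be rerun with $\psi$ in place of $\phi$ under this same splitting. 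One caveat worth recording for both arguments: Kwong's implication ``non-negative operator monotone $\Rightarrow$ Kwong'' genuinely needs non-negativity (a Kwong function is automatically $\ge 0$, since the $1\times 1$ Kwong matrix at $x_0$ is $f(x_0)/x_0$), and $\psi$ need not be non-negative---already $\psi(1)=-1$ in the example $g(x)=(x-1)^2$. The paper's proof does not address this point explicitly, so if you follow its decomposition you should be prepared to supply an additional argument here.
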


\begin{proof}
Note that $\frac{g(x)-g(0)}{x}$ is operator monotone on $(0, \infty)$ \cite[Lemma 2.1]{Uchiyama}. Therefore, $\frac{g(x^p)-g(0)}{x^p}$ is operator monotone increasing or decreasing on $(0, \infty)$ for each $-1\leq p\leq 1$. Hence,
\begin{equation*}
\frac{g(x^p)}{x^p}=\frac{g(x^p)-g(0)}{x^p}+\frac{g(0)}{x^p}
\end{equation*}
is Kwong because the function $x^{p}$ is Kwong for $-1\leq p\leq 1$ \cite[Chapter 5]{B1}.
\end{proof}

Let $f$ be a non-negative continuous function on $(0,\infty)$. The geometric concavity property states that
\begin{equation*}
f(a) \sharp_{\alpha} f(b)\leq f(a \sharp_{\alpha} b) ,
\end{equation*}
for $a,b>0$.
 In what follows, we study some operator arithmetic-geometric mean inequalities involving operator monotone functions. Consequently, we obtain a similar geometric concavity property for Kwong functions and operator convex functions.  First, we recall the following Lemma.

\begin{lem}\emph{\cite[Lemma 4.3.8]{7}}\label{7}
Let $f$ be a non-negative operator monotone function on $(0,\infty)$ and let $A>0$ . Then
\begin{equation*}
f(c A)\leq c f(A)
\end{equation*}
 for every scalar $c\geq 1$.
\end{lem}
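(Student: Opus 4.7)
The plan is to reduce the operator inequality to its scalar counterpart via functional calculus, and then establish the scalar inequality using the L\"owner integral representation (or, equivalently, the concavity of operator monotone functions on $(0,\infty)$).

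First I would observe that $cA$ is a scalar multiple of $A$, so both $f(cA)$ and $f(A)$ are continuous functions of the single operator $A$; in particular they commute pairwise and with $A$ itself, and are simultaneously diagonalisable. Writing the spectral decomposition $A = \sum_i \lambda_i P_i$ with eigenvalues $\lambda_i > 0$, the target inequality $f(cA) \leq c f(A)$ becomes equivalent to the pointwise scalar inequalities $f(c\lambda_i) \leq c f(\lambda_i)$. Thus the problem reduces to the scalar statement: $f(ct) \leq c f(t)$ for every $t > 0$ and every $c \geq 1$.

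For the scalar inequality I would invoke L\"owner's integral representation
\begin{equation*}
f(t) = \alpha + \beta t + \int_0^\infty \frac{st}{s+t}\, d\mu(s),
\end{equation*}
valid for every operator monotone function on $(0,\infty)$, where $\beta \geq 0$ and $\mu$ is a positive Borel measure on $(0,\infty)$. Non-negativity of $f$ forces $\alpha = f(0^+) \geq 0$. For $c \geq 1$ the elementary estimate $s + t \leq s + ct$ yields $\frac{sct}{s+ct} \leq c \cdot \frac{st}{s+t}$; combined with $\alpha \leq c\alpha$ and the linearity of the $\beta t$ term, termwise comparison in the representation produces $f(ct) \leq c f(t)$.

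The main obstacle, if any, is handling the boundary behaviour at $0$, but this is absorbed into the constant $\alpha = f(0^+)$, which is finite and non-negative exactly because $f$ is operator monotone (hence monotone) and non-negative on $(0,\infty)$. A slicker alternative avoiding the integral representation is to use that every operator monotone function on $(0,\infty)$ is operator concave, hence concave as a real-valued function; extending $f$ continuously to $0$ by $f(0):=f(0^+)\geq 0$ and writing $t = \tfrac{1}{c}(ct) + (1-\tfrac{1}{c})\cdot 0$, concavity gives $f(t) \geq \tfrac{1}{c} f(ct) + (1 - \tfrac{1}{c}) f(0)$, which rearranges to $c f(t) \geq f(ct) + (c-1) f(0) \geq f(ct)$.
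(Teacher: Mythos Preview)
The paper does not actually prove this lemma; it is merely recalled from \cite[Lemma~4.3.8]{7} and then applied in the proof of Theorem~\ref{3-1}. So there is no ``paper's own proof'' to compare against. Your argument is correct and self-contained, and either of the two routes you outline (the L\"owner integral representation, or concavity together with $f(0^{+})\geq 0$) is a standard way to obtain the scalar inequality.

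One small point of presentation: writing $A=\sum_i \lambda_i P_i$ implicitly assumes a discrete spectrum, which need not hold for a general $A>0$ in $\mathbb{B}(\mathcal{H})$. The cleaner phrasing of the reduction is that $cf(A)-f(cA)=g(A)$ for $g(t)=cf(t)-f(ct)$, and by the continuous functional calculus $g(A)\geq 0$ holds if and only if $g\geq 0$ on the spectrum of $A$; since $A>0$ is arbitrary, this is exactly the scalar statement $f(ct)\leq cf(t)$ for all $t>0$. This is a cosmetic fix and does not affect the substance of your proof.
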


Now, we give an operator mean inequality for operator monotone functions.
\begin{thm}\label{3-1}
Let $f$ be a non-negative operator monotone function on $(0,\infty)$. Suppose $A$ and $B$ are positive operators such that $0<sA\leq B\leq tA$ for some scalars $0<s\leq t$. If $\alpha, \beta\in [0,1]$, then 
\begin{equation*}
 f(A)\nabla_{\alpha} f(B) \leq \lambda f(A\sharp_{\beta} B),
\end{equation*}
where 
\begin{align*}
\lambda &=\max \left\lbrace  (t^{-\beta} \nabla_{\alpha} t^{1-\beta}),  (s^{-\beta} \nabla_{\alpha} s^{1-\beta})\right\rbrace.
\end{align*}
\end{thm}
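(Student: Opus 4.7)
The plan is to chain three facts along the path from $f(A)\nabla_\alpha f(B)$ up to $\lambda f(A\sharp_\beta B)$. First, since $f$ is a non-negative operator monotone function on $(0,\infty)$, it is automatically operator concave there, so operator concavity gives
\[
f(A)\nabla_\alpha f(B) \leq f(A\nabla_\alpha B).
\]
Second, Theorem \ref{5} applied under the sandwich hypothesis $0<sA\leq B\leq tA$ yields $A\nabla_\alpha B \leq \lambda (A\sharp_\beta B)$; applying the operator monotonicity of $f$ to this bound gives
\[
f(A\nabla_\alpha B) \leq f\bigl(\lambda(A\sharp_\beta B)\bigr).
\]
Third, Lemma \ref{7} says $f(cX)\leq c\, f(X)$ for every $X>0$ and every $c\geq 1$; applying this with $c=\lambda$ and $X=A\sharp_\beta B$, and concatenating the three estimates, delivers the claimed inequality.

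I expect the genuine obstacle to be the verification of $\lambda\geq 1$, which is precisely what legitimizes the final invocation of Lemma \ref{7}. Setting $g(x)=(1-\alpha)x^{-\beta}+\alpha x^{1-\beta}$, one has $g(1)=1$, and $g$ is convex on $(0,\infty)$ with $g'(1)=\alpha-\beta$. Hence $\lambda=\max\{g(s),g(t)\}\geq g(1)=1$ whenever $1\in[s,t]$, which is the natural sandwich regime underlying the corollaries of the previous section. Outside that regime a short case analysis on the sign of $\alpha-\beta$, together with the convexity of $g$, is needed to confirm that the maximum is attained at a point where $g\geq 1$; this is the only non-mechanical check in the argument, and all the remaining inequalities in the chain are formal consequences of Theorem \ref{5}, Lemma \ref{7}, and the operator concavity/monotonicity of $f$.
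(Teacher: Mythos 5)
Your three-step chain --- operator concavity of $f$, then Theorem \ref{5} combined with operator monotonicity, then Lemma \ref{7} with $c=\lambda$ --- is exactly the paper's proof, and you are right that the only genuine check is $\lambda\geq 1$. Two corrections to your discussion of that check, though. First, $g(x)=(1-\alpha)x^{-\beta}+\alpha x^{1-\beta}$ is \emph{not} convex on $(0,\infty)$ in general: for $\alpha=\beta=\tfrac12$ one computes $g''(x)=\tfrac18 x^{-5/2}(3-x)<0$ for $x>3$. What is true, and is already established by the derivative computation in the proof of Theorem \ref{5}, is that $g$ decreases and then increases, so its maximum over $[s,t]$ is attained at an endpoint; hence $\lambda=\max\{g(s),g(t)\}\geq g(1)=1$ whenever $s\leq 1\leq t$. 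Second, your hope that a case analysis rescues the remaining regime is misplaced: if $1\notin[s,t]$ then $\lambda<1$ can occur (take $\alpha=0$, $\beta=1$, $s=t=2$, giving $\lambda=\tfrac12$), and there the theorem itself fails --- with $B=2A$ and $f\equiv 1$ the asserted inequality reads $I\leq\tfrac12 I$. The paper hides the same issue behind the phrase ``without loss of generality we may assume $0<s<1<t$,'' which is not actually loss-free since $\lambda$ depends on $s$ and $t$; the statement really does require $s\leq 1\leq t$, which is automatic in the subsequent corollaries where $s=m/M$ and $t=M/m$.
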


\begin{proof}
 Since $f$ is a non-negative operator monotone function on $(0,\infty)$, it is operator concave \cite[Chapter V]{5}. Therefore,
\begin{equation}\label{eq7}
f(A)\nabla_{\alpha} f(B)\leq f(A \nabla_{\alpha} B).
\end{equation}
It is easy to verify that $t^{-\beta} \nabla_{\alpha} t^{1-\beta}\geq t^{\alpha-\beta}$ and $s^{-\beta} \nabla_{\alpha} s^{1-\beta}\geq s^{\alpha-\beta}$ for $0<s\leq t$ and $\alpha, \beta\in [0,1]$. Without loss of generality, we may assume that $0<s<1<t$. Therefore, $\lambda \geq 1$. Now, we have 
\begin{eqnarray*}
 f(A)\nabla_{\alpha} f(B) &\leq & f(A \nabla_{\alpha} B)\,\,\,\,\,\,\,\, \text{(by \eqref{eq7})}\\
&\leq&  f\left(\lambda (A\sharp_{\beta} B)\right) \,\,\,\,\, \text{(by Theorem \ref{5} and monotonicity of $f$)}\\
&\leq& \lambda f(A\sharp_{\beta} B)\,\,\,\,\, \text{(by Lemma \ref{7})}.
\end{eqnarray*}
\end{proof}

\begin{rem}
We notice that if $\alpha=\beta$ in Theorem \ref{3-1}, then $\lambda\leq \max\left\lbrace S(s), S(t) \right\rbrace$. Therefore,
\begin{equation*}
f(A)\sharp_{\alpha} f(B)\leq f(A)\nabla_{\alpha} f(B) \leq \lambda f(A\sharp_{\beta} B)\leq \gamma f(A\sharp_{\alpha} B),
\end{equation*}
where $\gamma=\max\left\lbrace S(s), S(t) \right\rbrace$. This shows that Theorem \ref{3-1} is a generalization of \cite[Theorem 1]{ghaemi}.
\end{rem}

\begin{cor}\label{3-2}
Let $A$ and $B$ be positive operators such that $0<sA\leq B\leq tA$  for some scalars $0<s\leq t$. If $p, \alpha, \beta\in [0,1]$, then 
\begin{equation*}
 A^p\nabla_{\alpha} B^p \leq \lambda^p (A\sharp_{\beta} B)^p
\end{equation*}
where 
\begin{align*}
\lambda &=\max \left\lbrace  (t^{-\beta} \nabla_{\alpha} t^{1-\beta}),  (s^{-\beta} \nabla_{\alpha} s^{1-\beta})\right\rbrace.
\end{align*}
\end{cor}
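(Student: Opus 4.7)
The plan is to specialize Theorem \ref{3-1} to the power function $f(x) = x^p$ and exploit its exact homogeneity to sharpen the constant from $\lambda$ to $\lambda^p$. The key observation is that for $p \in [0,1]$, the L\"owner-Heinz inequality tells us that $x \mapsto x^p$ is simultaneously operator monotone and operator concave on $(0, \infty)$, so this $f$ satisfies every hypothesis needed in the proof template of Theorem \ref{3-1}.

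Carrying out that template with $f(x)=x^p$, operator concavity immediately gives
$$A^p \nabla_\alpha B^p \leq (A \nabla_\alpha B)^p.$$
Next, Theorem \ref{5} yields $A \nabla_\alpha B \leq \lambda\,(A \sharp_\beta B)$, and because $x \mapsto x^p$ is operator monotone we may raise both sides to the $p$-th power to obtain
$$(A \nabla_\alpha B)^p \leq \bigl(\lambda (A \sharp_\beta B)\bigr)^p = \lambda^p (A \sharp_\beta B)^p.$$
Chaining the two inequalities produces the advertised bound.

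The main subtlety, and the reason the constant improves from the $\lambda$ of Theorem \ref{3-1} to $\lambda^p$, is the final identity $(\lambda X)^p = \lambda^p X^p$. In Theorem \ref{3-1} this step was handled by Lemma \ref{7}, which only gives the weaker $f(\lambda X) \leq \lambda f(X)$; here the exact homogeneity of the power function lets us avoid that slack. Since $\lambda \geq 1$ and $p \leq 1$, one has $\lambda^p \leq \lambda$, so this really is a refinement in this special case rather than a formal restatement. There is no genuine obstacle beyond recognising that this last step can be made sharper.
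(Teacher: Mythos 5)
Your proof is correct and follows the same route as the paper: the paper's own (terser) argument likewise reuses the intermediate inequality $f(A)\nabla_{\alpha} f(B)\leq f\left(\lambda (A\sharp_{\beta} B)\right)$ from the proof of Theorem \ref{3-1} with $f(x)=x^{p}$ and then concludes via the exact homogeneity $(\lambda X)^{p}=\lambda^{p}X^{p}$. You have merely made explicit the concavity/monotonicity steps and the reason the constant sharpens from $\lambda$ to $\lambda^{p}$, which the paper leaves implicit.
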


\begin{proof}
Note that $f(x)=x^p$ is a non-negative operator monotone function on $(0,\infty)$. Also, in the proof of Theorem \ref{3-1}, we have $ f(A)\nabla_{\alpha} f(B)\leq f\left(\lambda (A\sharp_{\beta} B)\right)$.
\end{proof}

\begin{cor}
Let  $f$ be a non-negative operator monotone function on $(0,\infty)$. Suppose $A$ and $B$ are positive operators such that $0<mI\leq A, B\leq MI$ for some scalares $0<m\leq M$. If $\alpha, \beta\in [0,1]$, then
\begin{equation*}
 f(A)\nabla_{\alpha} f(B) \leq \lambda f(A\sharp_{\beta} B)
\end{equation*}
where 
\begin{equation*}
\lambda=\max\left\lbrace (1-\alpha) \left(\frac{m}{M}\right)^{\beta} +\alpha \left(\frac{M}{m}\right)^{1-\beta}, (1-\alpha) \left(\frac{M}{m}\right)^{\beta} +\alpha \left(\frac{m}{M}\right)^{1-\beta} \right\rbrace.
\end{equation*}
\end{cor}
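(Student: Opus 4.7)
The plan is to read this corollary as the direct specialization of Theorem \ref{3-1} to the two-sided sandwich hypothesis $0<mI\leq A,B\leq MI$, so no new analytic content is needed — only bookkeeping that the two versions of $\lambda$ agree.

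First I would reduce the sandwich hypothesis to the relative sandwich used in Theorem \ref{3-1}. From $mI\le A\le MI$ and $mI\le B\le MI$ one gets $B\le MI\le\tfrac{M}{m}A$ and $B\ge mI\ge\tfrac{m}{M}A$, so $\tfrac{m}{M}A\le B\le\tfrac{M}{m}A$. Hence Theorem \ref{3-1} applies with
\[
s=\frac{m}{M},\qquad t=\frac{M}{m},
\]
yielding $f(A)\nabla_\alpha f(B)\le\lambda\,f(A\sharp_\beta B)$ with $\lambda=\max\{t^{-\beta}\nabla_\alpha t^{1-\beta},\;s^{-\beta}\nabla_\alpha s^{1-\beta}\}$.

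Next I would verify that this $\lambda$ coincides with the one in the statement. Substituting the chosen $s$ and $t$,
\[
t^{-\beta}\nabla_\alpha t^{1-\beta}=(1-\alpha)\Bigl(\tfrac{m}{M}\Bigr)^{\beta}+\alpha\Bigl(\tfrac{M}{m}\Bigr)^{1-\beta},
\]
\[
s^{-\beta}\nabla_\alpha s^{1-\beta}=(1-\alpha)\Bigl(\tfrac{M}{m}\Bigr)^{\beta}+\alpha\Bigl(\tfrac{m}{M}\Bigr)^{1-\beta},
\]
which are exactly the two terms whose maximum defines $\lambda$ in the corollary.

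There is really no obstacle here; the whole content is the identification $s=1/t$ together with the relative-order inequality $\tfrac{m}{M}A\le B\le\tfrac{M}{m}A$. The only minor care point is noting that one does not need the stronger symmetric conclusion $\lambda=\mu$ from Corollary \ref{2-1}; only the upper-bound inequality of Theorem \ref{3-1} is invoked, and that is why the single parameter $\lambda$ suffices.
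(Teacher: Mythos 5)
Your proof is correct and is exactly the argument the paper intends (the corollary is stated without proof, but the derivation mirrors the paper's own reduction in Corollary \ref{2-1}): pass from $0<mI\leq A,B\leq MI$ to $\tfrac{m}{M}A\leq B\leq\tfrac{M}{m}A$, apply Theorem \ref{3-1} with $s=m/M$, $t=M/m$, and check the two terms of $\lambda$ match. Nothing is missing.
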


Let $A, B\in \mathbb{M}_{n}$ be positive matrices. Ando and Hiai \cite{Ando} used the log-majorization to show that if  $\alpha\in[0,1]$ and $p\geq 1$, then 
\begin{equation*}
\vert \vert \vert A^{p}\sharp_{\alpha} B^{p} \vert \vert \vert \leq \vert \vert \vert (A\sharp_{\alpha} B)^{p} \vert \vert \vert ,
\end{equation*}
for every unitarily invariant norm $\vert \vert \vert \cdot \vert \vert \vert $. We recall that a norm $\vert\vert\vert \cdot \vert\vert\vert$ is said to be unitarily invariant if $\vert\vert\vert A \vert\vert\vert = \vert\vert\vert U A V\vert\vert\vert$ for all  matrices $A, U, V$ with $U$ and $V$ unitary. Notice that a unitarily invariant norm is monotone in the sense that $0 < A\leq B$ implies $\vert\vert\vert A \vert\vert\vert \leq \vert\vert\vert B \vert\vert\vert $. If $0<mI\leq A, B\leq MI$, Seo \cite{Seo} obtained an analog of Ando's inequality for $0< p\leq 1$ as follows
\begin{equation}\label{3-2-1}
\vert \vert \vert A^{p}\sharp_{\alpha} B^{p} \vert \vert \vert \leq  (S(h))^{p} \vert \vert \vert (A\sharp_{\alpha} B)^{p} \vert \vert \vert .
\end{equation}
Now, let $A, B\in \mathbb{M}_{n}$ such that $0<sA\leq B\leq tA$ and $0\leq p\leq 1$. Corollary \ref{3-2} shows that
\begin{equation*}
\vert \vert \vert A^{p}\sharp_{\alpha} B^{p}\vert \vert \vert \leq \vert \vert \vert A^{p}\nabla_{\alpha} B^{p}\vert \vert \vert\leq \lambda^{p} \vert \vert \vert (A\sharp_{\beta} B)^{p} \vert \vert \vert,
\end{equation*}
for all $\alpha, \beta\in [0,1]$. Notice that if $0<mI\leq A, B\leq MI$ and $\alpha=\beta$,  then $\lambda \leq S(h)$ and we get Seo's inequality \eqref{3-2-1}.

Next, we deduce the analogs of the geometric concavity property for Kwong and consequently for operator convex functions.

\begin{cor}\label{6}
Let $f$ be a real-valued Kwong function on $(0,\infty)$. Suppose $0\leq p\leq \frac{1}{2}$ and $0<sA\leq B\leq tA$ with $0< s\leq t$. If $\alpha, \beta\in [0,1]$, then 
\begin{equation*}
  (A^{p} f(A^{p})\nabla_{\alpha} B^{p} f(B^{p})) \leq \lambda (A\sharp_{\beta} B)^{p} f((A\sharp_{\beta} B)^{p})
\end{equation*}
where 
\begin{align*}
\lambda &=\max \left\lbrace  (t^{-\beta} \nabla_{\alpha} t^{1-\beta}),  (s^{-\beta} \nabla_{\alpha} s^{1-\beta})\right\rbrace.
\end{align*}
\end{cor}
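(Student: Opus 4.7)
The plan is to reduce Corollary \ref{6} directly to Theorem \ref{3-1} by exhibiting the composite function $g(x) := x^{p} f(x^{p})$ as a non-negative operator monotone function on $(0,\infty)$. Indeed, the statement has precisely the shape $g(A)\nabla_{\alpha} g(B) \le \lambda\, g(A\sharp_{\beta}B)$, which is the conclusion of Theorem \ref{3-1} applied to $g$.

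First, I would invoke Theorem \ref{8}(ii): since $f$ is a Kwong function on $(0,\infty)$ and $0\leq p\leq \tfrac{1}{2}$, the function $g(x)=x^{p}f(x^{p})$ is a non-negative operator monotone function on $(0,\infty)$. This is the only place the hypothesis $0\le p\le 1/2$ enters, and it is precisely what allows the transfer from a Kwong hypothesis on $f$ to an operator monotonicity statement about $g$.

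Next, with the sandwich hypothesis $0<sA\leq B\leq tA$ still in force, I apply Theorem \ref{3-1} to the non-negative operator monotone function $g$. That theorem gives
\begin{equation*}
g(A)\nabla_{\alpha} g(B) \;\leq\; \lambda\, g(A\sharp_{\beta} B),
\end{equation*}
with exactly the constant $\lambda=\max\{(t^{-\beta}\nabla_{\alpha}t^{1-\beta}),\,(s^{-\beta}\nabla_{\alpha}s^{1-\beta})\}$ appearing in the statement. Substituting back the definition of $g$ yields
\begin{equation*}
A^{p}f(A^{p})\nabla_{\alpha} B^{p}f(B^{p}) \;\leq\; \lambda\,(A\sharp_{\beta}B)^{p} f\!\left((A\sharp_{\beta}B)^{p}\right),
\end{equation*}
which is the desired inequality.

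There is essentially no obstacle: the work has already been done in Theorems \ref{8} and \ref{3-1}. The only point worth checking carefully is that the sandwich condition $0<sA\le B\le tA$ is preserved under the reduction (it is, since we do not alter $A$ or $B$, only apply the scalar function $g$ through the operator monotone machinery already encoded in Theorem \ref{3-1}).
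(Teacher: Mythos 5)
Your proposal is correct and follows exactly the paper's argument: the paper likewise observes that $x^{p}f(x^{p})$ is a non-negative operator monotone function by Theorem \ref{8} and then applies Theorem \ref{3-1}. No differences worth noting.
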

\begin{proof}
Note that $h(x)=x^{p} f(x^{p})$ is a non-negative operator monotone function on $(0,\infty)$ due to Theorem \ref{8}. Now, it is sufficient to use Theorem \ref{3-1}.
\end{proof}

\begin{ex}
The function $f(x)=\sinh^{-1}x=\ln\left(x+(x^2+1)^{\frac{1}{2}}\right)  $ is Kwong on $(0,\infty)$ \cite[Theorem 2.8]{15}. Applying $f(x)=\sinh^{-1}x$ to Corollary \ref{6} we arrive at
\begin{align*}
  (A^{p}\sinh^{-1}(A^{p})\nabla_{\alpha} B^{p}\sinh^{-1}(B^{p})) \leq \lambda (A\sharp_{\beta} B)^{p}\sinh^{-1}(A\sharp_{\beta} B)^{p}.
\end{align*}
In the case when $\alpha=\beta$, $0<mI\leq A, B\leq MI$ and $h=\frac{M}{m}$, we deduce the following operator geometric mean inequality
\begin{equation*}
(A^{p}\sinh^{-1}(A^{p})\sharp_{\alpha} B^{p}\sinh^{-1}(B^{p}))\leq S(h) (A\sharp_{\alpha} B)^{p}\sinh^{-1}(A\sharp_{\alpha} B)^{p}.
\end{equation*}
\end{ex}

\begin{cor}\label{3}
Let $g$ be a non-negative operator convex function on $(0,\infty)$, $0\leq p\leq \frac{1}{2}$, and $0<sA\leq B\leq tA$ for $0< s\leq t$. If $\alpha, \beta\in [0,1]$, then
\begin{equation*}
 g(A^{p})\nabla_{\alpha} g(B^{p}) \leq \lambda g((A\sharp_{\beta} B)^{p}),
\end{equation*}
where $\lambda$ is as in Corollary \ref{6}.
\end{cor}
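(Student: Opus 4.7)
My plan is to reduce Corollary \ref{3} directly to Corollary \ref{6} by choosing the right Kwong function associated with $g$. The key observation is that Theorem \ref{2}, applied with exponent $1 \in [-1,1]$, tells us that $f(x) := g(x)/x$ is a Kwong function on $(0,\infty)$ whenever $g$ is a non-negative operator convex function on $(0,\infty)$. This single identification powers the whole argument.

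With $f$ in hand, I would invoke Corollary \ref{6} for this $f$, with the same $p \in [0, 1/2]$, $\alpha, \beta \in [0,1]$, and the same sandwich $0 < sA \leq B \leq tA$. This yields
$$A^p f(A^p)\, \nabla_\alpha \, B^p f(B^p) \;\leq\; \lambda\, (A\sharp_\beta B)^p\, f((A\sharp_\beta B)^p),$$
with the same $\lambda$ as in the statement. The remaining step is a functional-calculus simplification: for any positive operator $X$, the operators $X^p$ and $g(X^p)$ are simultaneously diagonalizable, so $X^p f(X^p) = X^p \cdot (g(X^p)/X^p) = g(X^p)$. Substituting $X = A$, $X = B$, and $X = A \sharp_\beta B$ in the three occurrences collapses the displayed inequality to exactly
$$g(A^p)\, \nabla_\alpha \, g(B^p) \;\leq\; \lambda\, g((A\sharp_\beta B)^p),$$
as claimed.

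I do not anticipate any serious obstacle. The derivation is essentially two lines once one spots that $f(x) = g(x)/x$ is the correct Kwong function to feed into Corollary \ref{6}. The only verifications are that $f$ is well defined and non-negative on $(0,\infty)$ (immediate from $g \geq 0$ and the domain excluding $0$) and that the hypothesis of Theorem \ref{2} is exactly the assumed operator convexity of $g$. All the substantive work has already been carried out in Theorem \ref{2}, in Theorem \ref{3-1}, and in its Kwong-function consequence Corollary \ref{6}; Corollary \ref{3} is simply the readout of those results after the change of variable $f \leftrightarrow g/x$.
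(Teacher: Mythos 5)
Your proposal is correct and follows exactly the paper's own route: Theorem \ref{2} (with exponent $1$) gives that $g(x)/x$ is Kwong, and feeding this function into Corollary \ref{6} together with the functional-calculus identity $X^{p}\,\frac{g(X^{p})}{X^{p}} = g(X^{p})$ yields the claim. No gaps.
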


\begin{proof}
If  $g$ is a non-negative operator convex function on $(0,\infty)$, then Theorem \ref{2} ensures that $\frac{g(x)}{x}$ is a  Kwong functions on $(0,\infty)$. Thus, applying Corollary \ref{6} to the Kwong function $\frac{g(x)}{x}$, we get the desired inequality.
\end{proof}

\begin{rem}
Corollary \ref{3} gives a similar geometric concavity property for operator convex functions as follows
\begin{equation*}
g(A^{p})\sharp_{\alpha} g(B^{p})\leq \lambda g((A\sharp_{\beta} B)^{p}),
\end{equation*}
for all $\alpha, \beta\in [0,1]$, $0\leq p\leq \frac{1}{2}$ and $0<sA\leq B\leq tA$.
\end{rem}

\bigskip
\textbf{{Acknowledgments.}} N. Gharakhanlu was supported by a grant from the Iran national Elites Foundation (INEF) for a postdoctoral fellowship under the supervision of M. S. Moslehian.
\bigskip
\bibliographystyle{amsplain}

\end{document}